\documentclass[reqno]{amsart}
\usepackage{graphicx,amsmath,amssymb,hyperref,nicefrac,microtype,xcolor,bm,enumitem}
\usepackage[foot]{amsaddr}
\usepackage[paperwidth=6.11in,paperheight=9.26in,text={4.7in,7.65in},centering,top=.75in]{geometry}

\usepackage[numbers,square,comma]{natbib}

\usepackage{amsthm}
\theoremstyle{plain}

\makeatletter
\newtheorem*{rep@theorem}{\rep@title}
\newcommand{\newreptheorem}[2]{%
\newenvironment{rep#1}[1]{%
 \def\rep@title{#2 \ref{##1}}%
 \begin{rep@theorem}}%
 {\end{rep@theorem}}}
\makeatother


\newtheorem{theorem}{Theorem}

\newreptheorem{theorem}{Theorem}
\newreptheorem{algorithm}{Algorithm}
\newreptheorem{remark}{Remark}
\newreptheorem{lemma}{Lemma}
\newreptheorem{prop}{Proposition}

\numberwithin{equation}{section}

\numberwithin{theorem}{section}

\usepackage{mathpazo}
\usepackage[T1]{fontenc}

\newcommand{\ignore}[1]{}

\begin{document}

\title{Computing the Coefficients for Non-Periodic Highly Oscillatory Orthonormal Functions}

\author{Rockford Sison}
\email{rocky.foster@berkeley.edu}
\address{Department of Mathematics, University of
  Cincinnati}


\keywords{oscillatory integrals, orthogonal functions, 
three term recurrence relations}

\begin{abstract}
  A three term recurrence relation is derived for a basis consisting of polynomials multiplied by sines and cosines with large, but fixed frequencies. A numerical method for computing the coefficients of the three term recurrence relation is derived.
\end{abstract}


\maketitle

\markboth{ROCKFORD SISON}{PRODUCT BASIS
OF LEGENDRE POLYNOMIALS AND OSCILLATORY FUNCTIONS}

\section{Introduction}

Orthogonal functions, typically polynomials or sines and cosines, have a long history in solving problem\cite{gautschi:82}. These orthogonal functions give rise to three-term recurrence relations. We will examine the problem of trying to represent highly oscillatory, non-periodic functions, in the form
\begin{equation}
    f(x)\sin(\omega x) + g(x)\cos(\omega x), \label{oscfun}
\end{equation}
where $f(x)$ and $g(x)$ are assumed to be non-oscillatory functions. 

If one wishes to represent \eqref{oscfun} with a standard basis such as the Chebychev or Legendre polynomials, then the number of polynomials used must scale with $\omega$. Many papers have been written on computing the integrals of \eqref{oscfun}\cite{torii}\cite{methodsreview1}, and solving differential equations with oscillations\cite{petzjayyen1997}.

In the following paper, we will present an extension of orthogonal functions and their three term recurrence relations to problems involving oscillations. We will also provide a numerically stable method for computing the coefficients of the recursion for large $\omega$.


\section{Creating the basis}

For simplicity of formulas we focus on the case of \eqref{oscfun} where $\omega=2\pi k.$ We can justify this by noticing if $f(x)$ and $g(x)$ in \eqref{oscfun} are non-oscillatory, and $\omega=2\pi k+\epsilon$ where $|\epsilon|<\pi$, then there exist non-oscillatory $\hat{f}(x)$ and $\hat{g}(x)$ such that
\begin{equation}
    f(x)\sin(\omega x) + g(x)\cos(\omega x)=\hat{f}(x)\sin(2\pi k x) + \hat{g}(x)\cos(2\pi k x)
\end{equation}
via straightforward applications of addition formula for trigonometric functions. We define the following inner product

\begin{equation}
    <f(x),g(x)>:= \int_{-1}^1 f(x) g(x)  \;dx
\end{equation}

\begin{theorem}
The following functions form a basis for $\{x^k\sin(\omega x), x^k\cos(\omega x)\}_{k=0}^N$
\begin{eqnarray}
    p_0(x)&=&\cos(\omega x) \\
    q_0(x)&=&\sin(\omega x) \\
    p_1(x)&=& xp_0(x) +\frac{1}{2\omega}q_0(x)\\
    q_1(x)&=& xq_0(x)+\frac{1}{2\omega}p_0(x) \\
    p_{k+1}(x)&=& xp_k(x)-\frac{<xp_k,q_k>}{<q_k,q_k>}q_k(x)-\frac{<xp_k,p_{k-1}>}{<p_{k-1},p_{k-1}>}p_{k-1}(x) \\
    q_{k+1}(x)&=& xq_k(x)-\frac{<xq_k,p_k>}{<p_k,p_k>}p_k(x)-\frac{<xq_k,q_{k-1}>}{<q_{k-1},q_{k-1}>}q_{k-1}(x)
\end{eqnarray}

\end{theorem}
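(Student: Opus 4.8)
The plan is to prove the basis property by a triangular, Gram--Schmidt--type leading-term argument. Write $V_k := \operatorname{span}\{x^j\cos(\omega x),\, x^j\sin(\omega x) : 0\le j\le k\}$, so that the space in the statement is $V_N$. First I would record that the $2(N+1)$ generators of $V_N$ are linearly independent: if $P(x)\cos(\omega x)+Q(x)\sin(\omega x)\equiv 0$ for polynomials $P,Q$ of degree $\le N$, then evaluating at the infinitely many points where $\cos(\omega x)=0$ forces $Q\equiv 0$, and likewise $P\equiv 0$. Hence $\dim V_k = 2(k+1)$, and $V_0\subset V_1\subset\cdots\subset V_N$ is a filtration with $\dim(V_k/V_{k-1})=2$. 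Since the family $\{p_k,q_k\}_{k=0}^N$ has exactly $2(N+1)$ members, it suffices to prove that it is linearly independent.

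The core step is an induction establishing the leading-term structure. I claim that for every $k$ one has $p_k,q_k\in V_k$ together with
\[
p_k - x^k\cos(\omega x)\in V_{k-1}, \qquad q_k - x^k\sin(\omega x)\in V_{k-1},
\]
with the convention $V_{-1}:=\{0\}$. The base case $k=0$ is immediate, and $k=1$ is a direct check from the explicit formulas for $p_1,q_1$ (both correction terms lie in $V_0$, leaving leading terms $x\cos(\omega x)$ and $x\sin(\omega x)$). For the inductive step I would use that multiplication by $x$ maps $V_k$ into $V_{k+1}$ and sends $x^k\cos(\omega x)$ to $x^{k+1}\cos(\omega x)$; since the two subtracted terms in the recurrence lie in $V_k$ (indeed $q_k\in V_k$ and $p_{k-1}\in V_{k-1}$), it follows that $p_{k+1}\in V_{k+1}$ with $p_{k+1}-x^{k+1}\cos(\omega x)\in V_k$, and symmetrically for $q_{k+1}$. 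A by-product that must be folded into the same induction is that the leading term forces $p_k\neq 0$ and $q_k\neq 0$; consequently $\la p_k,p_k\ra>0$, $\la q_k,q_k\ra>0$, $\la p_{k-1},p_{k-1}\ra>0$, $\la q_{k-1},q_{k-1}\ra>0$, so every denominator in the recurrence is nonzero and $p_{k+1},q_{k+1}$ are well defined.

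With the leading-term claim in hand, independence follows from the filtration. In the quotient $V_k/V_{k-1}$ the images of $p_k$ and $q_k$ are exactly those of $x^k\cos(\omega x)$ and $x^k\sin(\omega x)$, which form a basis of this two-dimensional space; equivalently, the change-of-basis matrix from $\{x^j\cos(\omega x),x^j\sin(\omega x)\}$ to $\{p_j,q_j\}$ is block upper-triangular with $2\times 2$ identity diagonal blocks, hence invertible. Thus any relation $\sum_k(a_kp_k+b_kq_k)=0$ dies level by level: reducing modulo $V_{N-1}$ kills $a_N,b_N$, and descending the filtration kills the rest, so $\{p_k,q_k\}_{k=0}^N$ is a basis of $V_N$. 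I expect the main obstacle to be bookkeeping rather than depth: one must run a single induction that simultaneously maintains the leading-term normalization, certifies that the inner-product denominators are strictly positive so the recurrence is even meaningful, and separately handles the special definitions of $p_1$ and $q_1$, which do not fit the $k\ge 1$ pattern and so require their own verification before the general step takes over.
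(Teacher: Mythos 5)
Your proposal is correct for the literal statement, but it takes a genuinely different route from the paper and proves strictly less than the paper's own argument does. The paper runs the standard three-term-recurrence induction on \emph{orthogonality}: using the parity structure ($p_k$ has the parity of $k$, $q_k$ the opposite), it shows each $p_{k+1}$, $q_{k+1}$ is orthogonal to all earlier members --- orthogonality to $q_k$ and $p_{k-1}$ comes from the two subtracted Gram--Schmidt terms, orthogonality to $p_k$ and $q_{k-1}$ from parity, and orthogonality to everything of lower index from the induction hypothesis via $\langle xp_k,p_j\rangle=\langle p_k,xp_j\rangle=0$; the basis claim is then the usual corollary of having $2(N+1)$ pairwise orthogonal nonzero vectors. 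Your filtration/leading-term argument instead gets linear independence directly from the block-upper-triangular change of basis and never touches orthogonality; indeed it would go through verbatim for \emph{any} choice of the coefficients multiplying $q_k$ and $p_{k-1}$, which is a sign that it is not engaging with what makes this particular recurrence special. What your approach buys is rigor on two points the paper leaves implicit: that the denominators $\langle q_k,q_k\rangle$ and $\langle p_{k-1},p_{k-1}\rangle$ are strictly positive (so the recurrence is well defined) and that the family actually spans the stated space. What it gives up is the orthogonality of $\{p_j,q_j\}$, which is the real content of the construction and is what the rest of the paper relies on; a complete treatment would combine your well-definedness and spanning bookkeeping with the paper's parity-plus-induction orthogonality argument. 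One small caveat in your independence lemma: the identity $P(x)\cos(\omega x)+Q(x)\sin(\omega x)\equiv 0$ is a priori only known on $[-1,1]$, where $\cos(\omega x)$ has finitely many zeros, so you should invoke analyticity (or a Wronskian-type argument) to extend the identity to all of $\mathbb{R}$ before citing infinitely many zeros of the cosine.
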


\begin{proof}
 We follow the standard proof for three term recurrence relations. First note that $p_k(x)$ is even when $k$ is even, and odd when $k$ is odd, while $q_k(x)$ is even when $k$ is odd, and odd when $k$ is even. It is straight forward to verify $p_0(x), q_0(x), p_1(x)$, and $q_1(x)$ are all orthogonal to each other. All that remains is to prove the state via induction. 
 
 Examine $<p_{k+1},p_j(x)>$ where $j<k-1$. Then $<xp_k,p_j>=<p_k,xp_j>=0$ due to the fact that $xp_j(x)$ is a polynomial of degree less than $k$, and by assumption, $p_k$ is orthogonal to all such polynomials.

\begin{align*}
    <p_{k+1},q_k>&=&<xp_k,q_k>-\frac{<xp_k,q_k>}{<q_k,q_k>}<xp_k,q_k>=0 \\
    <p_{k+1},p_k>&=&0 \\
    <p_{k+1},q_{k-1}>&=&0 \\
    <p_{k+1},p{k-1}>&=&<xp_k,p_{k-q}>-\frac{<xp_k,p_{k-1}>}{<p_{k-1},p_{k-1}>}<p_{k-1},p_{k-1}>=0
\end{align*}

Where the two middle lines are due to the even and odd properties. Similarly, $q_{k+1}(x)$ is orthogonal to $\{p_j(x),q_j(x)\}_{j=0}^k$. All that remains to be checked is the orthogonality of $p_{k+1}(x)$ and $q_{k+1}(x)$. However, one is even and the other is odd, so they must also be orthogonal.
\end{proof}

\section{Computing the Coefficients}

We now lay out a procedure for computing the coefficients of the recursion for $\omega$ large relative to $N$. First we must choose a basis to represent the orthogonal functions. Naively, one may want to use the basis $\{x^k\cos(\omega x), x^k\sin(\omega x)\}$ to represent the orthogonal basis. However, as $k$ increases, this basis becomes and more linearly dependent. This may be seen quickly by noting that the matrix whose coefficients are given by $H^\omega_{i,j}=<x^i\cos(\omega x),x^j\cos(\omega x)>$ converges to the coefficients of the infamously ill-conditioned Hilbert matrix divided by two as $\omega$ goes to positive infinity. Hence representing this space in the "monomial" basis leads to poor numerical accuracy. 

We will represent the $\{p_j(x),q_j(x)\}_{j=0}^k$ as products of the Legendre polynomials with sines and cosines. Indeed, one can see that in the limit as $\omega$ goes to infinity, $\{P_k(x)\cos(\omega x), P_k(x)\sin(\omega x)\}$ become orthogonal to each other. Hence for large $\omega$, one may expect the Legendre polynomial basis multiplied by sines and cosines is a good choice. 

\begin{eqnarray}
    <P_k(x)\cos(\omega x), P_j(x)\cos(\omega x)>=\frac{<P_k(x),P_j(x)>+<P_k(x),P_j(x)\cos(2\omega x)>}{2}
\end{eqnarray}

As $\omega$ goes to infinity, this converges to either zero when $j\neq k$ or $||P_k||^2/2$ when $j=k$. Using a known orthogonal basis to represent another has been used in \cite{Pric1979}.

It will be necessary to compute inner products of the form \\$<P_k(x)\cos(\omega x),  P_j(x)\cos(\omega x)>$, $<P_k(x)\cos(\omega x), P_j(x)\sin(\omega x)>,$ and\\ $<P_k(x)\sin(\omega x), P_j(x)\sin(\omega x)>$. We will develop a recursive algorithm for computing these coefficients. We examine the following.

\begin{eqnarray}
    <P_k(x)\cos(\omega x), P_j(x)\cos(\omega x)> \\
    =\frac{<P_k(x),P_j(x)>}{2}+\frac{<P_k(x),P_j(x)\cos(2\omega x)>}{2} \\
    =\frac{\delta_{kj}||P_k||^2}{2} +\frac{1}{2}\int_{-1}^1 P_k(x)P_j(x)\cos(2\omega x) \;dx \\
    =\frac{\delta_{kj}||P_k||^2}{2}+\frac{P_k(x)P_j(x)\sin(2 \omega x)}{2\omega}\Bigg|_{-1}^1\\
    -\frac{1}{4\omega}\int_{-1}^1 (P_k'(x)P_j(x)+P_k(x)P_j'(x))
    \sin(2\omega x) \; dx \\
    =\frac{\delta_{kj}||P_k||^2}{2}-\frac{1}{4\omega}\int_{-1}^1 \sum_{l=0}^{k-1-2l\geq0} (2(k-1-2l)+1)P_{k-1-2l}(x)P_j(x)\sin(2 \omega x) \; dx
    \\-\int_{-1}^1 \sum_{l=0}^{j-1-2l\geq0} \frac{(2(j-1-2l)+1)}{4\omega}P_{k}(x)P_{j-1-2l}(x)\sin(2 \omega x) \; dx \\
    =\frac{\delta_{kj}||P_k||^2}{2}-\frac{1}{4\omega} \sum_{l=0}^{k-1-2l\geq0} (2(k-1-2l)+1)<P_{k-1-2l}(x),P_j(x)\sin(2 \omega x)>
    \\-\frac{1}{4\omega}\sum_{l=0}^{j-1-2l\geq0} \frac{(2(j-1-2l)+1)}{4\omega}<P_{k}(x),P_{j-1-2l}(x)\sin(2 \omega x)>\qquad
\end{eqnarray}

Thus we have the inner product we would like to compute is the sum of previous inner products with a $\sin(2\omega x)$ instead of a $\cos(2\omega x)$. We define the following matrices

\begin{eqnarray}
    M1_{j,k}=<P_{j}(x),P_k(x)> \\
    M2_{j,k}=<P_{j}(x)\cos(\omega x),P_k(x)\sin(\omega x)> \\
    M3_{j,k}=<P_{j}(x)\cos(\omega x),P_k(x)\cos(\omega x)> \\
    M4_{j,k}=<P_{j}(x)\sin(\omega x),P_k(x)\sin(\omega x)> \\
    M5_{j,k}=<P_{j}(x),P_k(x)\cos(2\omega x)> \\
    M6_{j,k}=<P_{j}(x),P_k(x)\sin(2\omega x)> 
\end{eqnarray}

These matrices have the following relations

\begin{eqnarray*}
    M1_{j,k}=\delta_{ij}||P_k||^2 \\
    M2_{j,k}=\frac{M6_{j,k}}{2} \\
    M3_{j,k}=\frac{M1_{j,k}}{2}+\frac{M5_{j,k}}{2} \\
    M4_{j,k}=\frac{M1_{j,k}}{2}-\frac{M5_{j,k}}{2} \\
    M5_{j,k}=\frac{(1+(-1)^{j+k})\sin(2\omega)}{2\omega}-\frac{1}{2\omega}\sum_{l=0}^{j-1-2l\geq0}(2(j-1-2l)+1)M6_{j-1-2l,k} \\
    -\frac{1}{2\omega}\sum_{l=0}^{k-1-2l\geq0}(2(k-1-2l)+1)M6_{j,k-1-2l} \\
    M6_{j,k}=\frac{(-1+(-1)^{j+k})\cos(2\omega)}{2\omega}+\frac{1}{2\omega}\sum_{l=0}^{j-1-2l\geq0}(2(j-1-2l)+1)M5_{j-1-2l,k} \\
    +\frac{1}{2\omega}\sum_{l=0}^{k-1-2l\geq0}(2(k-1-2l)+1)M5_{j,k-1-2l}
\end{eqnarray*}

The matrices satisfy the following properties. All matrices are symmetric. M1 is diagonal and can be computed via the known norms of the Legendre polynomials. M2, M3, and M4 can all be computed once M5 and M6 are known. M5 and M6 can be populated by making entries on successive skew diagonals, i.e. first make $M5_{0,0}$ and $M6_{0,0}$. Then make $M5_{1,0}, M5_{0,1}, M6_{1,0},$ and $M6_{0,1}$ via the recursion. Continue by making the next skew diagonal formed of elements $M5_{j,k}$ and $M6_{j,k}$ such that $j+k=2$. Due to symmetry we only need to compute the upper halves of these matrices. The recursion relations are stable for $2\omega>j,k$. And finally, by assumption on $\omega$, $\sin(2\omega)=0$ and $\cos(2\omega)=1$. 

Let $f(x)$ and $g(x)$ have the forms

\begin{eqnarray}
    f(x)=\sum_{k=0}^N a_k P_k(x) \cos(\omega x) + b_k P_k(x) \sin(\omega x) \\
    g(x)=\sum_{k=0}^M c_k P_k(x) \cos(\omega x) + d_k P_k(x) \sin(\omega x)
\end{eqnarray},
then we have
\begin{equation}
    <f,g>=\vec{a}^T \cdot M2 \cdot \vec{d} + \vec{a}^T \cdot M3 \cdot \vec{c} + \vec{b}^T \cdot M2 \cdot \vec{c} + \vec{b}^T \cdot M4 \cdot \vec{d} 
\end{equation}
where every M matrix has been taken to have dimensions $N\times M$. In this framework we may compute all the coefficients of our recursion. However, it has been observed the norm of the "monic" orthogonal functions decays rapidly; it has been observed they decay roughly on an order of 2. Therefore it is recommended to compute the normalized orthogonal functions instead.

We only care about the case of large omega because when omega is small relative to N, you should just use regular quadrature. The determination for what large omega relative to N means will be from where the algorithm is stable. Particularly computing the matrices M1, M2, M3, M4, M5, M6, M7, and M8. The recurrence relation for computing these matrices is stable for $\omega>N$ where N is the size of the square matrix.

Now that we have a numerical method of computing inner products, we may represent the orthogonal functions in this basis and compute the coefficients of the recursion directly. The method will be stable as long as two conditions are met. The first is $\omega>N$ and the second is that the Legendre Polynomials multiplied by sines and cosines well approximates the space of our orthogonal functions.



\section{The Derivative Matrix}

We note that we may use the recurrence relation to compute integrals and derivatives of a given basis function. A given basis function may be represented as

\begin{equation}
    p_k(x)=\sum_{j=0}^k a_{kj}P_j(x)\cos(\omega x) + b_{kj}P_j(x)\sin(\omega x).
\end{equation}
By taking the derivative of both sides we arrive at
\begin{equation}
    p_k'(x)=\sum_{j=0}^k a_{kj}(P_j'(x)\cos(\omega x)-\omega P_j(x)\sin(\omega x)) + b_{kj}(P_j'(x)\sin(\omega x)+\omega P_j(x)\cos(\omega x))
\end{equation}.
Given the left hand side is in the form of polynomials multiplied by sines and cosines, we may represent it in our the basis of Legendre polynomials multiplied by sines and cosines. Hence we can form a derivative matrix. Let $\vec{b}=\{p_0(x),q_0(x),p_1(x),p_2(x),...$. Then we have
\begin{equation}
    \vec{b}'=\mathbf{D}\vec{b}
\end{equation}.
We note that $\mathbf{D}$ is triangular. 

Explicitly, the derivative matrix is

\begin{equation}
    \mathbf{I_1}=\begin{pmatrix}
1 & 0 \\
0 & 1 
\end{pmatrix}
\hspace{1cm}
    \mathbf{I_2}=\begin{pmatrix}
0 & 1 \\
-1 & 0 
\end{pmatrix}
\hspace{1cm}
\mathbf{0}=\begin{pmatrix}
0 & 0 \\
0 & 0 
\end{pmatrix}
\end{equation}

Then we have 

\begin{equation}
    \mathbf{D}=
    \begin{pmatrix}
        \omega \mathbf{I_2} & \mathbf{I_1}& \mathbf{0}& \mathbf{I_1}& \mathbf{0} & \mathbf{I_1} & \hdots \\
         & \omega \mathbf{I_2} & 3\mathbf{I_1} & \mathbf{0} & 3\mathbf{I_1} & \mathbf{0}& \hdots \\
         &  & \omega \mathbf{I_2} & 5\mathbf{I_1} & \mathbf{0} & 5\mathbf{I_1} & \hdots \\
         & & &  \omega \mathbf{I_2} & 7\mathbf{I_1} & \mathbf{0} & \hdots\\
         & & & & \hspace{-.2cm}\ddots & \hspace{-.2cm}\ddots & \hspace{-.2cm}\ddots 
    \end{pmatrix}
\end{equation}.
We note that this D Matrix is simple to understand, we have a block diagonal matrix from the derivative landing on sine and cosine, and then an upper triangular matrix that is directly similar to the derivative matrix for Legendre polynomials. This is the derivative matrix for taking the derivative of the $\{\cos(\omega x),\sin(\omega x), x\cos(\omega x), x\sin(\omega x), ...,x^N\cos(\omega x), x^N\sin(\omega x)\}$ basis. In order to get the derivative matrix for the orthogonal basis, let $\vec{f}, \mathbf{D}, and \vec{g}$ be in the first basis. Then we have

\begin{eqnarray*}
    \mathbf{D}\vec{f}=\vec{g} \\
    \mathbf{B}^{-1}\mathbf{DB}(\mathbf{B}^{-1}\vec{f})=(\mathbf{B}^{-1}\vec{g})
\end{eqnarray*}

Hence our derivative matrix in the orthogonal basis is $\mathbf{B}^{-1}\mathbf{DB}.$












\section{Future Directions}
To our knowledge, this is the first time orthogonal functions with a three term recurrence relation have been created in pairs. In order to create a quadrature method from these orthonormal functions, we must also generalize quadrature methods to handle the mixed three term recurrence relations. Computing the coefficients of these mixed three term recurrence relations was the necessary first step. 

We note this method can be generalized to include any number separate frequencies $\omega_1, \omega_2, ..., \omega_N$. The stability of the resulting numerical methods now depends on the smallest distance from one frequency to another relative to the number of orthogonal functions used. Of key interest will be the case where $\omega_1=0$ and $\omega_2$ is large. We believe this would lead to an Enriched Spectral Method. We also believe this method is also suitable and straight-forward to implement in higher dimensions on a square grid, as is standard for quadrature methods.

We are interested in applying this method to problems with singularities of differing orders such as $\{x^k,x^k\log(x),x^k\log(x)^2\}_{k=0}^N$. We believe we can develop methods suited to handling problems where these types of singularities (crack phenonmenons) arise. Certain papers have have orthogonal polynomials for for $\{x^k\log(x)^m\}_{k=0}^N$ for natural numbers $m$.



\bibliographystyle{plain}
\bibliography{refs}

\begin{thebibliography}{1}

\bibitem{methodsreview1}
G.~A. Evans and J.~R. Webster.
\newblock A comparison of some methods for the evaluation of highly oscillatory
  integrals.
\newblock {\em Journal of Computational and Applied Mathematics}, 112:55--69,
  1999.

\bibitem{gautschi:82}
Walter Gautschi.
\newblock On generating orthogonal polynomials.
\newblock {\em SIAM J. Sci. Stat. Comput.}, 3(3):289--317, 1982.

\bibitem{torii}
Takemitsu Hasegawa and Tatsuo Torii.
\newblock Indefinite integrationn of oscillatory functions by the chebyshev
  series expansion.
\newblock {\em Journal of Computational and Applied Mathematics}, 17(1):21--29,
  1986.

\bibitem{petzjayyen1997}
and Jeng~Yen Linda R.~Petzold, Laurent O.~Jay.
\newblock Numerical solution of highly oscillatory ordinary differential
  equations.
\newblock {\em Acta Numerica}, pages 437--483, 1997.

\bibitem{Pric1979}
Thomas~E. Price.
\newblock Orthogonal polynomials for nonclassical weight functions.
\newblock {\em SIAM Journal on Numerical Analysis}, 16(6):999--1006, 1979.

\end{thebibliography}

\end{document}